\makeatletter \@namedef{subjclassname@2010}{
  \textup{2010} Mathematics Subject Classification}
\newtheorem{thm}{Theorem}[section]
\newtheorem{lem}[thm]{Lemma}
\newtheorem{pro}[thm]{Proposition}
\theoremstyle{remark}
\theoremstyle{definition}
\newcommand{\R}{\mathbb{R}}
\begin{document}

\title{Chernoff Like Counterexamples Related to Unbounded Operators}
\author[S. DEHIMI and M. H. MORTAD]{Souheyb Dehimi and Mohammed Hichem Mortad$^*$}

\thanks{* Corresponding author.}
\date{}
\keywords{Unbounded Operators. Closed and symmetric operators.
Domains} \dedicatory{Dedicated to the memory of Professor Paul R.
Chernoff}

\subjclass[2010]{Primary 47A05.}

\address{(The first author): University of Mohamed El Bachir El Ibrahimi, Bordj Bou Arreridj.
Algeria.}

\address{(The corresponding author) Department of
Mathematics, University of Oran 1, Ahmed Ben Bella, B.P. 1524, El
Menouar, Oran 31000, Algeria.\newline {\bf Mailing address}:
\newline Pr Mohammed Hichem Mortad \newline BP 7085 Seddikia Oran
\newline 31013 \newline Algeria}

\email{sohayb20091@gmail.com} \email{mhmortad@gmail.com,
mortad.hichem@univ-oran1.dz.}

\begin{abstract}In this paper, we give an example of a closed unbounded operator whose square's domain and adjoint's square domain are equal and trivial. Then, we come up with an essentially self-adjoint whose square has a trivial
domain.
\end{abstract}

\maketitle

\section{Introduction}

The striking example by Chernoff is well known to specialists. It
states that there is a closed, unbounded, densely defined, symmetric
and semi-bounded operator $A$ such that $D(A^2)=\{0\}$. This was
obtained in \cite{CH} and came in to simplify a construction already
obtained by Naimark in \cite{NAI}. It is worth noticing that
Schm\"{u}dgen \cite{SCHMUDG-1983-An-trivial-domain} obtained almost
simultaneously (as Chernoff) that every unbounded self-adjoint $T$
has two closed and symmetric restrictions $A$ and $B$ such that
\[D(A)\cap D(B)=\{0\}\text{ and } D(A^2)=D(B^2)=\{0\}.\]

This result by Schm\"{u}dgen (which was generalized later by
Brasche-Neidhardt in \cite{Brasche-Neidhardt}. See also
\cite{Arlinski-Zagrebnov}) is great but remains fairly theoretical.
There seems to be no other simple Chernoff like (whatever simplicity
means) example around in the literature except the one by Chenroff.
It is worth recalling that this type of operators cannot be
self-adjoint nor can they be normal. They cannot
 be invertible either.

So, what we will do here is to completely avoid Chernoff's (or
Naimark's) construction and get a closed operator whose square has a
trivial domain. As a bonus, its adjoint's square domain is also
trivial. The example is based on matrices of unbounded operators.
So, we refer readers to \cite{tretetr-book-BLOCK} for properties of
block operator matrices.

In our second example, we give an essentially self-adjoint bounded
not everywhere defined operator whose square has also a trivial
domain. Recall that Chernoff's example cannot be essentially
self-adjoint as it is already closed.

Finally, we assume that readers are familair with basic properties
of bounded and unbounded operators. See
\cite{Mortad-Oper-TH-BOOK-WSPC} and \cite{SCHMUDG-book-2012} for
further reading. We do recall a few facts which may not be well
known to some readers.

Recall that $C_0^{\infty}(\R)$ denotes here the space of infinitely
differentiable functions with compact support. The following result,
whose proof relies upon the Paley-Wiener Theorem, is well known.

\begin{thm}\label{Fourier trans f, hat f C_0inf f=0 thm}
If $f\in C_0^{\infty}(\R)$ is such that $\hat f\in
C_0^{\infty}(\R)$, then $f=0$.
\end{thm}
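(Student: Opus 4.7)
The plan is to exploit the holomorphic extension supplied by the Paley--Wiener theorem together with the identity principle for entire functions.

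First, since $f \in C_0^{\infty}(\R)$, $f$ has compact support, say contained in $[-R,R]$. The Paley--Wiener theorem then applies: the Fourier transform $\hat f$, defined initially as a function on $\R$, extends to an entire function on $\C$ of exponential type, concretely
\[
\hat f(z) = \int_{-R}^{R} f(t)\, e^{-itz}\, dt, \qquad z \in \C.
\]
This extension is holomorphic on all of $\C$ and agrees with the usual Fourier transform on $\R$.

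Next, I use the hypothesis that $\hat f$ itself lies in $C_0^{\infty}(\R)$, meaning $\hat f$ vanishes outside some compact subset of $\R$. In particular, $\hat f$ vanishes on a non-empty open interval of the real line. The real line $\R$ is a subset of $\C$ containing accumulation points, so the identity principle for holomorphic functions forces the entire extension of $\hat f$ to vanish identically on $\C$. Restricting back to $\R$ yields $\hat f \equiv 0$.

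Finally, since the Fourier transform is injective on, say, $L^1(\R) \cap L^2(\R)$ (or by direct application of the Fourier inversion formula, which is valid here because $f$ and $\hat f$ are both Schwartz), $\hat f = 0$ forces $f = 0$. There is no real obstacle in this argument; the only subtlety worth mentioning is the appeal to Paley--Wiener to promote $\hat f$ from a real-variable function to an entire function, without which the identity theorem could not be invoked.
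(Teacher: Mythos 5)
Your argument is correct and is precisely the route the paper has in mind: the paper omits the proof but states explicitly that it ``relies upon the Paley--Wiener Theorem,'' which is exactly your extension of $\hat f$ to an entire function followed by the identity principle and Fourier inversion. No gaps.
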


One may wonder whether Theorem \ref{Fourier trans f, hat f C_0inf
f=0 thm} remains valid for the so-called Cosine Fourier Transform?
The answer is obviously no as any non-zero odd function provides a
counterexample. However, the same idea of proof of Theorem
\ref{Fourier trans f, hat f C_0inf f=0 thm} works to establish the
following (and we omit the proof):

\begin{thm}\label{Fourier Cosine Trans f=0 thm}
If $f\in C_0^{\infty}(\R)$ is \textbf{even} and such that its
\textbf{Cosine} Fourier Transform too is in $C_0^{\infty}(\R)$, then
$f=0$.
\end{thm}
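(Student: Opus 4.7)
The plan is to reduce the claim directly to Theorem \ref{Fourier trans f, hat f C_0inf f=0 thm} by exploiting the evenness of $f$, rather than re-running the Paley--Wiener argument from scratch.

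First, I would fix a convention for the Cosine Fourier Transform, say
\[
F_c f(\xi)=\int_0^{\infty} f(x)\cos(\xi x)\,dx,
\]
which for any $\xi\in\R$ is an even function of $\xi$. The key elementary observation is that for an even $f\in L^1(\R)$, the standard Fourier transform collapses to the cosine part: writing $e^{-i\xi x}=\cos(\xi x)-i\sin(\xi x)$ and using that $f(x)\sin(\xi x)$ is an odd function of $x$, one gets
\[
\hat f(\xi)=\int_{\R} f(x)\cos(\xi x)\,dx=2\,F_cf(\xi),
\]
so $\hat f$ is, up to a harmless constant, the same object as $F_c f$ viewed on all of $\R$.

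Next, I would translate the hypotheses. We are given $f\in C_0^{\infty}(\R)$ and $F_c f\in C_0^{\infty}(\R)$. The displayed identity then gives $\hat f\in C_0^{\infty}(\R)$ as well. At this point Theorem \ref{Fourier trans f, hat f C_0inf f=0 thm} applies verbatim and yields $f=0$, which is exactly the desired conclusion.

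The only point that requires a touch of care, and which I would flag as the main (small) obstacle, is the interpretation of the hypothesis \emph{the Cosine Fourier Transform is in $C_0^{\infty}(\R)$}. Since $F_cf$ is automatically even in $\xi$, there is no ambiguity: being smooth and compactly supported on $[0,\infty)$ and extending by evenness produces an element of $C_0^{\infty}(\R)$, and conversely any $C_0^{\infty}(\R)$ member of its range is automatically even. Once this is clarified, the reduction above is immediate and no further use of Paley--Wiener is needed beyond what Theorem \ref{Fourier trans f, hat f C_0inf f=0 thm} already encapsulates.
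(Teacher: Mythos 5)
Your argument is correct. The identity $\hat f(\xi)=\int_{\R}f(x)\cos(\xi x)\,dx$ for even $f$ (the sine part dying by oddness) is valid, it does show that the hypothesis ``$F_cf\in C_0^{\infty}(\R)$'' forces $\hat f\in C_0^{\infty}(\R)$, and Theorem \ref{Fourier trans f, hat f C_0inf f=0 thm} then finishes the job; your remark that $F_cf$, being given by an integral against $\cos(\xi x)$ over a compact $x$-set, is automatically smooth and even in $\xi$ on all of $\R$ disposes of the only interpretive issue. The paper itself omits the proof, saying only that ``the same idea of proof'' as for Theorem \ref{Fourier trans f, hat f C_0inf f=0 thm} works, i.e.\ it gestures at re-running the Paley--Wiener argument directly on the cosine transform (a compactly supported $f$ has $F_cf$ extending to an entire function of exponential type, which cannot have compact support unless it vanishes). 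Your route is a genuine, and arguably preferable, alternative: instead of redoing Paley--Wiener you use Theorem \ref{Fourier trans f, hat f C_0inf f=0 thm} as a black box and observe that on even functions the cosine transform \emph{is} the Fourier transform up to a constant, so Theorem \ref{Fourier Cosine Trans f=0 thm} is literally the even special case of Theorem \ref{Fourier trans f, hat f C_0inf f=0 thm}. What the paper's suggested approach buys is independence from the precise statement of Theorem \ref{Fourier trans f, hat f C_0inf f=0 thm}; what yours buys is brevity and the clarification of why evenness is the exact hypothesis that makes the reduction work (and why, as the paper notes, the statement fails for odd functions, whose cosine transform vanishes identically).
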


\section{Main Counterexamples}

\begin{lem}\label{mmm}There are unbounded self-adjoint operators $A$ and $B$
such that
\[D(A^{-1}B)=D(BA^{-1})=\{0\}\]
(where $A^{-1}$ and $B^{-1}$ are not bounded).
\end{lem}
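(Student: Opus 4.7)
The plan is to translate the two trivial-domain statements into intersection conditions on the domains and ranges of $A,B$, and then to realise those conditions via a block-matrix construction built on a single ``maximally asymmetric'' closed operator.

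\emph{Reformulation.} For a self-adjoint injective $A$, writing $y=Az$ with $z=A^{-1}y\in D(A)$ yields $D(BA^{-1})=A(D(A)\cap D(B))$, and since $A$ is one-to-one on $D(A)$, this forces $D(BA^{-1})=\{0\}$ to be equivalent to $D(A)\cap D(B)=\{0\}$. The same observation applied with the roles of $A$ and $B$ swapped (using that $B$ is self-adjoint injective too) shows that $D(A^{-1}B)=\{0\}$ is equivalent to $\operatorname{ran}(A)\cap\operatorname{ran}(B)=\{0\}$, i.e.\ $D(A^{-1})\cap D(B^{-1})=\{0\}$. Finally, $A^{-1}$ and $B^{-1}$ being unbounded is the same as $0\in\sigma(A)\cap\sigma(B)$. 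So the lemma reduces to producing two self-adjoint operators with all these properties.

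\emph{Block-matrix construction.} Following the ``matrices of unbounded operators'' philosophy advertised in the introduction (\cite{tretetr-book-BLOCK}), I would work on $\mathcal{H}\oplus\mathcal{H}$ and set
\[ A=\begin{pmatrix} 0 & T \\ T^* & 0 \end{pmatrix}, \qquad B=\begin{pmatrix} 0 & T^* \\ T & 0 \end{pmatrix}, \]
for a single closed, densely defined, injective operator $T$ with dense range. Both $A$ and $B$ are then self-adjoint (a standard block-operator fact), and $0\in\sigma(A)\cap\sigma(B)$ as soon as $T^{-1}$ is unbounded. A direct computation of domains and inverses yields
\[ D(A)\cap D(B)=(D(T)\cap D(T^*))^{\oplus 2}, \qquad D(A^{-1})\cap D(B^{-1})=(\operatorname{ran}(T)\cap\operatorname{ran}(T^*))^{\oplus 2}. \]
The lemma is thus reduced to producing a closed, densely defined, injective $T$ with dense range and unbounded inverse, such that $D(T)\cap D(T^*)=\{0\}$ and $\operatorname{ran}(T)\cap\operatorname{ran}(T^*)=\{0\}$.

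\emph{Main obstacle.} The delicate step is exhibiting such a ``maximally asymmetric'' $T$. A natural candidate is $T=M_\phi\,\mathcal{C}\,M_\psi$ on $L^2(0,\infty)$, where $\mathcal{C}$ denotes the cosine Fourier transform and $\phi,\psi$ are carefully chosen unbounded positive weights; then $T^*=M_\psi\,\mathcal{C}\,M_\phi$, and membership of a non-zero $f$ in $D(T)\cap D(T^*)$ (respectively $\operatorname{ran}(T)\cap\operatorname{ran}(T^*)$) forces two incompatible weighted integrability conditions on $f$ and on $\mathcal{C}f$ simultaneously. The challenge is to tune $\phi,\psi$ so that these joint conditions ultimately place $f$ under the hypothesis of Theorem~\ref{Fourier Cosine Trans f=0 thm}---or its Paley--Wiener relative Theorem~\ref{Fourier trans f, hat f C_0inf f=0 thm}---whose conclusion $f=0$ then delivers both trivial intersections.
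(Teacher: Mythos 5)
Your opening reformulation is correct and is precisely the paper's own first step: for injective self-adjoint $A$ and $B$ one has $D(BA^{-1})=A\bigl(D(A)\cap D(B)\bigr)$ and $D(A^{-1}B)=B^{-1}\bigl(\operatorname{ran}(A)\cap\operatorname{ran}(B)\bigr)$, so the lemma reduces to exhibiting an injective self-adjoint pair with unbounded inverses satisfying $D(A)\cap D(B)=D(A^{-1})\cap D(B^{-1})=\{0\}$. Your block-matrix step is also sound as far as it goes: the off-diagonal matrix built from a closed densely defined $T$ and its adjoint is genuinely self-adjoint, and the domain and range identities you record do hold.

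The genuine gap is that the argument stops exactly where the content of the lemma begins. The existence of the required pair \emph{is} the lemma, and you have only traded it for an equivalent existence problem: a closed, densely defined, injective $T$ with dense range and unbounded inverse such that $D(T)\cap D(T^*)=\{0\}$ and $\operatorname{ran}(T)\cap\operatorname{ran}(T^*)=\{0\}$. No such $T$ is produced. The candidate $T=M_\phi\,\mathcal{C}\,M_\psi$ is only a heuristic: for unbounded weights this product need not be densely defined or closed, the identity $T^*=M_\psi\,\mathcal{C}\,M_\phi$ is in general only an inclusion of the formal adjoint into the true adjoint, and no choice of $\phi,\psi$ is specified for which the two trivial-intersection conditions are actually verified --- you explicitly label this step ``the challenge.'' The paper fills this hole by citation rather than construction: it takes Kosaki's explicit pair on $L^2(\R)$ (Proposition 13, Section 5 of \cite{KOS}, an idea due to Chernoff), namely $A=e^{-H}$ with $H=i\,d/dx$ and $B=VAV$ with $V$ multiplication by the sign function, for which $D(A)\cap D(B)=D(A^{-1})\cap D(B^{-1})=\{0\}$ is established there. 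To complete your proof you would either have to invoke such a known pair anyway (which makes the block-matrix detour superfluous) or genuinely carry out and verify the weighted cosine-transform construction, which is a substantial piece of analysis and not a routine tuning of weights.
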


\begin{proof}Let $A$ and $B$ be two unbounded self-adjoint operators
such that
\[D(A)\cap D(B)=D(A^{-1})\cap D(B^{-1})=\{0\}\]
where $A^{-1}$ and $B^{-1}$ are not bounded. An explicit example of
a such pair on $L^2(\R)$ may be found in \cite{KOS}, Proposition 13,
Section 5 (the idea is in fact due to Paul R. Chernoff). It reads:
Let $A=e^{-H}$ where $H=id/dx$ and $A$ is defined on its maximal
domain, say. Then $A$ is a non-singular (unbounded) positive
self-adjoint operator. Now, set $B=VAV$ where $V$ is the
multiplication operator by
\[v(x)=\left\{\begin{array}{cc}
               -1,&x<0, \\
               1,&x\geq 0.
             \end{array}
\right.\] Then $V$ is a fundamental symmetry, that is, $V$ is
unitary and self-adjoint. Moreover, $A$ and $B$ obey
\[D(A)\cap D(B)=D(A^{-1})\cap D(B^{-1})=\{0\}.\]
This is not obvious and it is carried out in several steps.

For our purpose, we finally have:
\[D(A^{-1}B)=\{x\in D(B):Bx\in D(A^{-1})\}=\{x\in D(B):Bx=0\}=\{0\}\]
for $B$ is one-to-one. Similarly, we may show that
$D(BA^{-1})=\{0\}$.
\end{proof}

\begin{thm}
There is a densely defined unbounded and closed operator $T$ on a
Hilbert space such that
\[D(T^2)=D({T^*}^2)=\{0\}.\]
\end{thm}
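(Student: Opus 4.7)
The plan is to package the two self-adjoint operators $A$ and $B$ produced by Lemma \ref{mmm} into an off-diagonal block operator matrix acting on $\mathcal{H} = L^2(\R) \oplus L^2(\R)$. Concretely, I would set
\[
T = \begin{pmatrix} 0 & A^{-1} \\ B & 0 \end{pmatrix}, \qquad D(T) = D(B) \oplus D(A^{-1}).
\]
Both $B$ and $A^{-1}$ are densely defined, closed, unbounded, and self-adjoint (the latter because $A$ is a non-singular unbounded positive self-adjoint operator). A routine coordinate-wise check on graphs, using closedness of each entry, then shows that $T$ is densely defined, closed, and unbounded.

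Next I would evaluate $T^2$ directly. For $(x,y) \in D(T)$,
\[
T^2 \begin{pmatrix} x \\ y \end{pmatrix} = T \begin{pmatrix} A^{-1} y \\ B x \end{pmatrix} = \begin{pmatrix} A^{-1} B x \\ B A^{-1} y \end{pmatrix},
\]
so $(x,y) \in D(T^2)$ iff $x \in D(A^{-1}B)$ and $y \in D(BA^{-1})$. Lemma \ref{mmm} then gives $D(T^2) = \{0\} \oplus \{0\} = \{0\}$.

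For the adjoint, rather than citing a general block-matrix result, I would compute $T^*$ from the definition. Expanding $\langle T(x,y), (u,v) \rangle = \langle A^{-1} y, u \rangle + \langle B x, v \rangle$ and exploiting the self-adjointness of the blocks, the $x$ and $y$ variables decouple, so the antilinear functional $(x,y) \mapsto \langle T(x,y), (u,v) \rangle$ is bounded iff $u \in D((A^{-1})^*) = D(A^{-1})$ and $v \in D(B^*) = D(B)$. This yields
\[
T^* = \begin{pmatrix} 0 & B \\ A^{-1} & 0 \end{pmatrix}, \qquad D(T^*) = D(A^{-1}) \oplus D(B),
\]
and the identical squaring argument then gives $D({T^*}^2) = D(B A^{-1}) \oplus D(A^{-1} B) = \{0\}$ by Lemma \ref{mmm}.

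The only subtle point is the identification of $T^*$: one must verify that the candidate block matrix really exhausts the full adjoint and is not merely a restriction of it. This is where the self-adjointness of both $A^{-1}$ and $B$ is essential, since it forces the two sesquilinear forms in $x$ and $y$ to separate cleanly and makes $(A^{-1})^* = A^{-1}$ and $B^* = B$. Once this is in hand, the rest is bookkeeping and every property demanded by the theorem follows from Lemma \ref{mmm}.
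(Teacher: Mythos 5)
Your proposal is correct and follows essentially the same route as the paper: the same off-diagonal block matrix $T$ on $D(B)\oplus D(A^{-1})$, the same computation of $T^2$ and ${T^*}^2$ as diagonal matrices with entries $A^{-1}B$ and $BA^{-1}$, and the same appeal to Lemma \ref{mmm}. The only difference is that you verify the formula for $T^*$ directly from the definition of the adjoint, whereas the paper cites a general result on adjoints of block operator matrices; your verification is sound.
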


\begin{proof}Let $A$ and $B$ be two unbounded self-adjoint operators
such that
\[D(A)\cap D(B)=D(A^{-1})\cap D(B^{-1})=\{0\}\]
where $A^{-1}$ and $B^{-1}$ are not bounded. Now, define
\[T=\left(
      \begin{array}{cc}
        0 & A^{-1} \\
        B & 0 \\
      \end{array}
    \right)
\]
on $D(T):=D(B)\oplus D(A^{-1})\subset L^2(\R)\oplus L^2(\R)$. Since
$A^{-1}$ and $B$ are closed, we may show that $T$ is closed on
$D(T)$. Moreover, $D(T)$ is dense in $L^2(\R)\oplus L^2(\R)$. Now,
\[T^2=\left(
      \begin{array}{cc}
        0 & A^{-1} \\
        B & 0 \\
      \end{array}
    \right)\left(
      \begin{array}{cc}
        0 & A^{-1} \\
        B & 0 \\
      \end{array}
    \right)=\left(
      \begin{array}{cc}
        A^{-1}B & 0 \\
        0 & BA^{-1} \\
      \end{array}
    \right).\]
   By Lemma \ref{mmm}, we have
   \[D(T^2)=D(A^{-1}B)\oplus D(BA^{-1})=\{0\}\oplus \{0\}=\{(0,0)\},\]
   as needed.

   Finally, we know that (cf. \cite{tretetr-book-BLOCK} or \cite{Moller-Szafanriac-matri-unbounded}) that
   \[T^*=\left(
      \begin{array}{cc}
        0 & B \\
        A^{-1}& 0 \\
      \end{array}
    \right)\]
    because $A^{-1}$ and $B$ are self-adjoint. As above,
    \[{T^*}^2=\left(
      \begin{array}{cc}
        BA^{-1} & 0 \\
        0 &A^{-1}B  \\
      \end{array}
    \right)\]
  on $D({T^*}^2)=D(BA^{-1})\oplus D(A^{-1}B)=\{(0,0)\}$, marking the
  end of the proof.
\end{proof}

The second promised example is given next.

\begin{pro}
There exists a densely defined essentially self-adjoint operator $A$
such that
\[D(A^2)=\{0\}.\]
\end{pro}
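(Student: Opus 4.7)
The plan is to exploit Theorem~\ref{Fourier Cosine Trans f=0 thm} directly: I will restrict the (bounded, self-adjoint, unitary) cosine Fourier transform to the dense subspace of even $C_0^\infty$ functions, and the vanishing of $D(A^2)$ will then be almost immediate.

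More precisely, I would work on the Hilbert space $L^2(0,\infty)$, identified with the even subspace of $L^2(\R)$ via even extension, and take
\[
D:=\{f\in C_0^\infty(\R):\ f\text{ is even}\},
\]
which is dense in $L^2(0,\infty)$. Denoting by $\mathcal{F}_c$ the cosine Fourier transform (a bounded self-adjoint involution on $L^2(0,\infty)$), I would define
\[
A:=\mathcal{F}_c\big|_D.
\]

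Then $A$ is symmetric, bounded, and not everywhere defined. Its closure is the unique continuous extension of $A$, namely $\mathcal{F}_c$ on all of $L^2(0,\infty)$, which is self-adjoint; hence $A$ is essentially self-adjoint. To finish, I would compute
\[
D(A^2)=\{f\in D:\ Af\in D\}=\{f\in D:\ \mathcal{F}_c f\in D\},
\]
and observe that any $f$ in this set is an even function in $C_0^\infty(\R)$ whose cosine Fourier transform also lies in $C_0^\infty(\R)$, so Theorem~\ref{Fourier Cosine Trans f=0 thm} forces $f=0$. Thus $D(A^2)=\{0\}$.

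No step is really hard here; the only point worth a brief comment is essential self-adjointness, which is automatic because a bounded symmetric operator on a dense subspace extends by continuity to an everywhere-defined bounded symmetric, hence self-adjoint, operator on the whole Hilbert space. The real content of the proposition lives entirely in Theorem~\ref{Fourier Cosine Trans f=0 thm}.
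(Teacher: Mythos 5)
Your proposal is correct and follows essentially the same route as the paper: the paper's operator $A=\mathcal{F}+\mathcal{F}^*$ on even-$C_0^{\infty}(\R)$ is (as the paper itself notes) just the cosine Fourier transform restricted to that dense domain, so your $\mathcal{F}_c|_D$ is the same construction up to normalization and the harmless identification of $L^2_{\textrm{even}}(\R)$ with $L^2(0,\infty)$. Both arguments reduce the triviality of $D(A^2)$ to Theorem~\ref{Fourier Cosine Trans f=0 thm} and obtain essential self-adjointness from boundedness plus symmetry on a dense domain.
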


\begin{proof}Let
\[L_{\textrm{even}}^2(\R)=\{f\in L^2(\R):f(x)=f(-x)\text{ almost everywhere in }\R\}.\]
Then $L_{\textrm{even}}^2(\R)$ is closed in $L^2(\R)$ and so it is
in fact a Hilbert space with respect to the induced $L^2(\R)$-inner
product.

Let $\mathcal{F}$ be the restriction of the $L^2$-Fourier Transform
to  even-$C_0^{\infty}(\R)$ (constituted of even functions in
$C_0^{\infty}(\R)$). Set $A=\mathcal{F}+\mathcal{F}^*$ ($A$ is
therefore just the Cosine Fourier Transform) which is defined on
even-$C_0^{\infty}(\R)$ as $\mathcal{F}^*$ is defined on the whole
of $L_{\textrm{even}}^2(\R)$. Then $A$ is densely defined because
even-$C_0^{\infty}(\R)$ is dense in $L_{\textrm{even}}^2(\R)$.
Besides, $A$ is symmetric for
\[A=\mathcal{F}+\mathcal{F}^*\subset \overline{\mathcal{F}}+\mathcal{F}^*\subset (\mathcal{F}+\mathcal{F}^*)^*=A^*.\]
Moreover, $D(A^*)=L^2(\R)$. Hence $\overline{A}=A^*$, that is, $A$
is essentially self-adjoint.

Now,
\begin{align*}
A^2&=(\mathcal{F}+\mathcal{F}^*)(\mathcal{F}+\mathcal{F}^*)\\
&=\mathcal{F}(\mathcal{F}+\mathcal{F}^*)+\mathcal{F}^*(\mathcal{F}+\mathcal{F}^*)\\
&=\mathcal{F}(\mathcal{F}+\mathcal{F}^*)+\mathcal{F}^*\mathcal{F}+\mathcal{F}^{*2}
\text{ (for $\mathcal{F}^*$ is defined everywhere)}.
\end{align*}
But,
\[D[\mathcal{F}(\mathcal{F}+\mathcal{F}^*)]=\{f\in \text{even-}C_0^{\infty}(\R):(\mathcal{F}+\mathcal{F}^*)f\in \text{even-}C_0^{\infty}(\R)\}=\{0\}\]
by Theorem \ref{Fourier Cosine Trans f=0 thm}. Accordingly,
\[D(A^2)=\{0\}.\]
\end{proof}

\end{document}